\title{}
\def\lf{\left}
\def\ri{\right}
\def\Ric{\mbox{Ric}}
\def\la{\langle}
\def\ra{\rangle}
\def\p{\partial}
\def\ii{\sqrt{-1}}
\def\R{\mathbb{R}}
\def\C{\mathbb{C}}
\def\vv<#1>{\langle#1\rangle}
\def\XXint#1#2{\setbox0=\hbox{$#1{#2}{\int}$}{#2}\kern-.5\wd0 }
\def\XXint#1#2#3{{\setbox0=\hbox{$#1{#2#3}{\int}$}
     \vcenter{\hbox{$#2#3$}}\kern-.5\wd0}}
\def\vv<#1>{\langle#1\rangle}
\def\lf{\left}
\def\ri{\right}
\def\a{{\alpha}}
\def\b{{\beta}}
\def\g{{\gamma}}
\def\ba{{\bar\a}}
\def\bb{{\bar\b}}
\def\bg{{\bar\g}}
\def\abb{{\a\bb}}
\def\p{\partial}
\def\la{\langle}
\def\ra{\rangle}
\def\ii{\sqrt{-1}}
\def\R{{\mathbb{R}}}
\def\CP{{\mathbb{CP}}}
\def\be{\begin{equation}}
\def\ee{\end{equation}}
\def\lf{\left}
\def\ri{\right}
\def\a{{\alpha}}
\def\b{{\beta}}
\def\g{{\gamma}}
\def\ba{{\bar\a}}
\def\bb{{\bar\b}}
\def\bg{{\bar\g}}
\def\Ric{\text{Ric}}
\def\abb{{\a\bb}}
\def\p{\partial}
\def\C{\Bbb C}
\def\p{\partial}
\def\C{\Bbb C}
 \def\ii{\sqrt{-1}}
\newtheorem{thm}{Theorem}[section]
\newtheorem{lem}{Lemma}[section]
\newtheorem{prop}{Proposition}[section]
\newtheorem{cor}{Corollary}[section]
\theoremstyle{definition}
\newtheorem{defn}{Definition}[section]
\theoremstyle{remark}
\newtheorem{rem}{Remark}[section]
\numberwithin{equation}{section}
\begin{document}
\title{Some comparison theorems for K\"ahler manifolds}

\keywords{ K\"ahler manifolds, bisectional curvature, comparison}

\begin{abstract} In this work, we will verify some comparison results on K\"ahler manifolds. They are  complex Hessian comparison for   the distance function from a closed complex submanifold of a K\"ahler manifold with holomorphic bisectional curvature bounded below by a constant, eigenvalue comparison and volume comparison in terms of scalar curvature. This work is motivated by  comparison results of Li and Wang \cite{LW}.
\end{abstract}

\renewcommand{\subjclassname}{\textup{2010} Mathematics Subject Classification}
 \subjclass[2010]{Primary 53B35 ; Secondary  53C55}
\author{Luen-Fai Tam$^1$ and Chengjie Yu$^2$}

\address{The Institute of Mathematical Sciences and Department of
 Mathematics, The Chinese University of Hong Kong,
Shatin, Hong Kong, China.} \email{lftam@math.cuhk.edu.hk}

\address{Department of Mathematics, Shantou University, Shantou, Guangdong, China } \email{cjyu@stu.edu.cn}

\thanks{$^1$Research partially supported by Hong Kong RGC General Research Fund  \#CUHK 403108}
\thanks{$^2$Research partially supported by the National Natural Science Foundation of
China (10901072) and GDNSF (9451503101004122).}
\date{October 2010}

\maketitle
\markboth{Luen-Fai Tam and Chengjie Yu}
 {Some comparison theorems for K\"ahler manifolds}
\section{Introduction}

In this work, we will study some comparison theorems on K\"ahler
manifolds. There is a well-known Hessian comparison for the distance function on Riemannian manifolds in terms of lower bound of sectional curvature. It is expected that for K\"ahler manifolds the lower bound of sectional curvature can be replaced by the lower bound of bisectional curvature to obtain a complex Hessian comparison for the distance function. In fact, in  \cite{LW}, Li and
Wang gave a sharp upper estimate for the Laplacian of the distance
function from a point. They also
gave a sharp upper estimate for the complex Hessian for the distance
function in the case that the lower bound of the bisectional curvature $K=0$, i.e., on K\"ahler manifolds with
nonnegative holomorphic bisectional curvature. This last result was
also proved by Cao and Ni \cite{CN} using Li-Yau-Hamilton Harnack
type inequality for the heat equation.   In the note, we shall verify this complex Hessian comparison for general lower bound $K$ and show that the complex Hessian of the distance function is bounded above by that in the complex space forms. See Theorem \ref{hesscomp-t1} and \ref{hesscomp-t2} for more details.

Our next result is motivated by the   well-known Licherowicz-Obata
theorem on Riemannian manifolds, which says that if $(M^m,g)$ is a
compact Riemannian manifold with Ricci curvature bounded below by
$(m-1)K$, where $K>0$ is a constant, then the first nonzero
eigenvalue $\lambda_1$ satisfies $\lambda_1\ge mK$ and equality
holds only if $(M,g)$ is isometric to the standard sphere of radius
$1/\sqrt K$. It is well-known that if $(M^n,g)$ is a K\"ahler
manifold such that Ricci curvature is such that $R_{\abb}\ge
kg_\abb$ for some constant $k>0$, then the first nonzero eigenvalue
$\lambda_1$ of the complex Laplacian is at least $k$ (see
\cite{Futaki1988}). Our next result is the following:

{\it Let $(M^n,g)$ be as above. Suppose the K\"ahler form is in the
first Chern class. If $\lambda_1=k$, then $M$ is K\"ahler-Einstein.}

As a corollary, when $(M^n,g)$ is a K\"ahler manifold with positive
holomorphic bisectional curvature such that the Ricci curvature is
bounded below by $ n+1 $, then $\lambda_1\ge n+1$ and equality holds
if and only if $(M^n,g)$ is holomorphically isometric to $\CP^n$
with the Fubini-Study metric (of constant holomorphic sectional
curvature 2). As an application we obtain a partial result on the equality case for the diameter estimate of
 Li and Wang \cite{LW} on compact K\"ahler manifolds with holomorphic bisectional curvature bounded below by 1. See Corollary \ref{diamter-t}.

 In \cite{LW}, it was proved that if a compact K\"ahler manifold has bisectional curvature bounded below by 2 (see Definition \ref{definition1}), then the volume  of the manifold is less than or equal to the volume of $\CP^n$ with the Fubini-Study metric. Our last result is an observation to relax their conditions by replacing the lower bound of the bisectional curvature by the lower bound of the scalar curvature,  with the assumption that the bisectional curvature is positive. In fact, we prove:

{\it Let $(M^n, g)$ be a compact manifold with positive holomorphic bisectional curvature. Suppose the scalar curvature $k_2\ge R\ge k_1>0$ for some constants $k_1$ and $k_2$. Then the volume $V(M,g) $ of $M$ satisfies $V(\CP^n,h_{k_2})\le V(M,g)\le V(\CP^n,h_{k_1})$. Moreover, if   one of the inequalities is an equality, then  $(M,g)$ is holomorphically isometric the $ \CP^n $ with the Fubini-Study metric.}

The result is a consequence of a more general result,  which is related to a conjecture of Schoen \cite{Schoen89} which states: If $(M^n,h)$ is a closed hyperbolic manifold and $g$ is another metric on $M$ with scalar curvature $R(g)\ge R(h)$, then $V(g)\ge V(h)$. The conjecture was proved for $n=3$ by Perelmann \cite{Perelman02,Perelman03}. We will compare volume of a compact K\"ahler manifold with the volume of a related  K\"ahler-Einstein metric in terms of upper and lower bounds of the scalar curvature. See Proposition \ref{volume}.

The paper is organized as follows: In \S2, we will study comparison of the complex Hessian for the distance function. In \S3, we will study eigenvalue comparison and in \S4, we will study volume comparison.

The authors would like to thank  Xiaowei Wang for some useful discussions.

\section{Complex Hessian comparisons}

Let $ M^n$ be a complex manifold with complex dimension $n$ and let $J$ be the complex structure. Suppose $g$ is  a Hermitian metric such that $(M,J,g)$ is K\"ahler. Suppose $\{e_1,e_2,\cdots,e_n\}$
 is a frame on $T^{(1,0)}(M)$, let $g_{\alpha\bar \beta}:=g(e_{\a},\bar
 e_\beta)$. We also write $g(X,\bar Y)$ as $\la X,  \bar Y\ra$ and $||X||^2=g(X,\bar X)$ for $X, Y\in T^{(1,0)}(M)$.
 Following \cite{LW}, we have the following definition.
 \begin{defn}\label{definition1} Let $(M,J, g)$ be a K\"ahler manifold. We say that the holomorphic bisectional curvature of $M$ is bounded below by a constant $K$, denoted by $BK_M\ge K$, if
 \begin{equation}\label{holobi-def}
 \frac{R(X,\bar X,Y,\bar Y)}{\|X\|^2\|Y\|^2+|\vv<X,\bar Y>|^2}\geq K
 \end{equation}
 for any two nonzero (1,0)-vectors $X,Y$.
 \end{defn}
 We remark that on a K\"ahler manifold with constant holomorphic sectional curvature $2K$, \eqref{holobi-def} is an equality for all nonzero $X, Y\in T^{(1,0)}(M)$.

 Let $S^p$ be a complex submanifold of $M^n$ with complex dimension $p\ge 0$ and let $r$ be the distance function from $S$. We want to give a comparison result for  the complex Hessian of $r$. We always assume that $S$ is connected and closed. Let $x_0\in S$ and let $\sigma$ be a geodesic from $x_0$ which is orthogonal to $S$ at $x_0$ and is parametrized by arc length $t$ with $0\le t\le T$. Assume that $\sigma$ is within the cut locus of $S$ so that the distance function $r$ is smooth near $\sigma|_{(0,T]}$. Let $e_1=\frac1{\sqrt 2}(\sigma'-\ii J\sigma')=\frac1{\sqrt 2}(\nabla r-\ii J \nabla r )$ and let $e_1,e_2,\dots, e_n$ be unitary frames  parallel along $\gamma$ such that $e_{n-p+1},\dots,e_n$ are tangent to $S$ at $t=0$. In the following   $f_{\abb}$ etc are   covariant derivatives of the function $f$  and repeated indices mean summation. The computations in \cite{LW} show:

 \begin{lem}\label{evolutionhess-l1} With the above notations, on $\sigma$, we have
 \begin{equation}\label{evolutionhess-e1}
   \frac{d}{dr}r_{\abb}+r_{\a\bg}r_{\g\bb}+r_{\a\g}r_{\bg\bb}=-\frac12R_{\abb1\bar1},
 \end{equation}
 and
 \begin{equation}\label{evolutionhess-e2}
   \frac{d}{dr}r_{\a\b}+r_{\a\g}r_{\bg\b}+r_{\a\bg}r_{\g\b}= \frac12R_{\a\bar1\b\bar1}.
 \end{equation}
 \end{lem}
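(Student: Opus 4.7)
The plan is to derive both identities as the $(1,1)$ and $(2,0)$ components of the classical Riccati equation for the Hessian of the distance function along a unit speed geodesic, read off in the complex frame parallel along $\sigma$. Specifically, when $r$ is smooth along $\sigma$, the real Hessian $H := \nabla^2 r$ satisfies
\[
\nabla_{\sigma'} H + H^2 + R(\cdot, \sigma', \sigma', \cdot) = 0,
\]
with $H^2(X, Y) := \sum_i H(X, f_i)H(f_i, Y)$ for any orthonormal frame $\{f_i\}$ and $R(X, \sigma', \sigma', Y) := \la R(X, \sigma')\sigma', Y\ra$; this is standard, following from $\nabla_{\sigma'}\sigma' = 0$, the identity $\nabla_X \nabla r = H X$, and the definition of the Riemann tensor.

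I would then obtain the two identities by evaluating this equation on the pairs $(e_\a, \bar e_\b)$ and $(e_\a, e_\b)$. Since $\{e_\g\}$ is parallel along $\sigma$, the covariant derivative $\nabla_{\sigma'}$ applied to $H(e_\a, \bar e_\b) = r_{\abb}$ or $H(e_\a, e_\b) = r_{\a\b}$ reduces to $\frac{d}{dr}$; and a direct expansion of $H^2$ using $\{e_\g, \bar e_\g\}_\g$ as a Hermitian dual basis (with $e_\g$ pairing with $\bar e_\g$) gives
\[
H^2(e_\a, \bar e_\b) = r_{\a\g}\, r_{\bg\bb} + r_{\a\bg}\, r_{\g\bb}, \qquad H^2(e_\a, e_\b) = r_{\a\g}\, r_{\bg\b} + r_{\a\bg}\, r_{\g\b},
\]
with $\g$ summed, supplying the quadratic terms on the left-hand sides of \eqref{evolutionhess-e1} and \eqref{evolutionhess-e2}.

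For the curvature terms, I would substitute $\sigma' = \frac{1}{\sqrt 2}(e_1 + \bar e_1)$ and expand each of $R(e_\a, \sigma', \sigma', \bar e_\b)$ and $R(e_\a, \sigma', \sigma', e_\b)$ into four summands. The K\"ahler selection rule $R(X, Y, Z, W) = 0$ whenever $X, Y$ (or $Z, W$) are both of type $(1,0)$ or both of type $(0,1)$ annihilates three of the four summands in each expansion. The surviving term in the $(e_\a, \bar e_\b)$ case is $\frac{1}{2} R(e_\a, \bar e_1, e_1, \bar e_\b) = \frac{1}{2} R_{\a\bar 1 1 \bb}$, which the K\"ahler Bianchi symmetry $R_{\abb\gbd} = R_{\a\bd\g\bb}$ rewrites as $\frac{1}{2} R_{\abb 1\bar 1}$; the overall minus sign of the Riccati equation then produces the right-hand side of \eqref{evolutionhess-e1}. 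In the $(e_\a, e_\b)$ case the survivor is $\frac{1}{2} R_{\a\bar 1 \bar 1 \b} = -\frac{1}{2} R_{\a\bar 1 \b \bar 1}$ by antisymmetry in the last two slots, and the combined minus signs give $+\frac{1}{2} R_{\a\bar 1 \b\bar 1}$, matching \eqref{evolutionhess-e2}.

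The main obstacle is purely bookkeeping: one must keep track of the fact that $\frac{d}{dr}$ coincides with $\nabla_{\sigma'}$ on frame components (which uses the parallelism of $\{e_\g\}$ along $\sigma$), that $H^2$ is summed with respect to the correct Hermitian dual basis, and which of the four terms in each curvature expansion survives the K\"ahler vanishing. Once these three points are organized, the lemma follows by direct substitution.
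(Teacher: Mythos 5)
Your proof is correct, but it takes a genuinely different route from the paper's. You quote the real Riccati equation $\nabla_{\sigma'}H+H^2+R(\cdot,\sigma',\sigma',\cdot)=0$ for the Hessian of $r$ along a radial geodesic and then decompose it by type: the completeness relation $\sum_i f_i\otimes f_i=\sum_\g\lf(e_\g\otimes\bar e_\g+\bar e_\g\otimes e_\g\ri)$ yields exactly the quadratic terms $r_{\a\g}r_{\bg\bb}+r_{\a\bg}r_{\g\bb}$ (and likewise for the $(2,0)$ component), and the K\"ahler vanishing of curvature components that are of type $(2,0)$ or $(0,2)$ in either pair of slots kills three of the four summands coming from $\sigma'=\frac1{\sqrt2}(e_1+\bar e_1)$; your sign bookkeeping ($R_{\a\bar11\bb}=R_{\abb1\bar1}$ by the K\"ahler symmetry, and $R_{\a\bar1\bar1\b}=-R_{\a\bar1\b\bar1}$ by skewness in the last pair) is right. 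The paper instead differentiates the eikonal identity $2r_\g r_{\bg}=1$ twice in the complex directions $e_\a,\bar e_\b$, uses the Ricci commutation identity to produce the curvature term, and then identifies $r_{\abb\g}r_{\bg}+r_{\abb\bg}r_\g$ with $\frac{d}{dr}r_{\abb}$ via $r_1=\frac1{\sqrt2}$ and $r_\a=0$ for $\a>1$. The two arguments are of course two faces of the same computation---the Riccati equation is itself usually derived from the eikonal equation---but yours outsources the analytic step to a standard Riemannian fact and reduces the K\"ahler content to linear algebra of the type decomposition, whereas the paper's version is self-contained and lands directly on the curvature components in the index positions used in the sequel.
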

 \begin{proof} Extend $e_i$ to be a unitary frames near a point on $\sigma$.
Note that on $\sigma$
\begin{equation}
\begin{split}
0=&(\|\nabla r\|^2)_{\alpha\bar\beta}\\
=&(2r_{\gamma}r_{\bar\gamma})_{\alpha\bar\beta}\\
=&2(r_{\gamma\alpha\bar\beta}r_{\bar\gamma}+r_{\gamma}r_{\bar\gamma\alpha\bar\beta}+r_{\gamma\alpha}r_{\bar\gamma\bar\beta}+r_{\gamma\bar\beta}r_{\bar\gamma\alpha})\\
=&2(r_{\alpha\bar\beta\gamma}r_{\bar\gamma}+R_{\alpha\bar\delta\gamma\bar\beta}r_{\delta}r_{\bar\gamma}+r_{\gamma}r_{\alpha\bar\beta\bar\gamma}+r_{\gamma\alpha}r_{\bar\gamma\bar\beta}+r_{\gamma\bar\beta}r_{\bar\gamma\alpha}).
\end{split}
\end{equation}
So,
\begin{equation}\label{eqn-1}
r_{\alpha\bar\beta\gamma}r_{\bar\gamma}+r_{\alpha\bar\beta\bar\gamma}r_{\gamma}+r_{\alpha\gamma}r_{\bar\gamma\bar\beta}+r_{\gamma\bar\beta}r_{\alpha\bar\gamma}=-R_{\alpha\bar\delta\gamma\bar\beta}r_{\delta}r_{\bar\gamma}.
\end{equation}
Since $e_1=\frac{1}{\sqrt 2}(\nabla r-J\nabla r)$, and $e_1,e_2,\cdots,e_n$ are parallel   along $\sigma$, we have
\begin{equation}\label{eqn-2}
r_{1}=\frac{1}{\sqrt{2}}\ \mbox{and}\ r_{\alpha}=0\ \mbox{for any $\alpha>1$ }
\end{equation}
and
\begin{equation}\label{eqn-3}
 r_{\alpha\bar\beta\gamma}r_{\bar\gamma}+r_{\alpha\bar\beta\bar\gamma}r_{\gamma}
 =\frac1{\sqrt 2}(r_{\alpha\bar\beta 1}+r_{\alpha\bar\beta\bar1})=\frac{d}{dr}r_{\alpha\bar\beta}.
 \end{equation}
By \eqref{eqn-1}--\eqref{eqn-3}, we conclude that \eqref{evolutionhess-e1} is true.

The proof of \eqref{evolutionhess-e2} is similar.
 \end{proof}

 The following fact may be well-known.  We include the proof for the sake of completeness.

\begin{lem}\label{lem-asym-r-S}
Let $M^{n+m}$ be a complete Riemannian manifold and $S^{m}$ be a
closed submanifold of $M$. Let $r$ be the distance function to $S$.
Let $\gamma(s)$ be a normal geodesic orthogonal to $S$ with
$\gamma(0)\in S$.  Let $e_1,e_2,\cdots,e_n,e_{n+1},\cdots,e_{n+m}$
are parallelled orthonormal frames along $\gamma$ such that
$e_{n+1}(0),e_{n+2}(0),\cdots,e_{n+m}(0)$ is tangent to $S$ and
$e_1=\gamma'$. Then
\begin{equation}
\lim_{s\to 0}\Bigg(\lf[\nabla^2r(\gamma(s))(e_i,e_j)\ri]_{1\leq i,j\leq
n+m}-\left(\begin{array}{ccl}0&0&0\\0&\frac{1}{s}I_{n-1}&0\\0&0&
[h_{ij}]_{n+1\leq
i,j\leq n+m}
\end{array}\right)\Bigg)=0.
\end{equation}
where $h_{ij}=\la h(e_i,e_j),\gamma'\ra$, and $h(X,Y)=-(\nabla_XY)^\perp$ for $X, Y\in T(S)$ is the second fundamental form of $S$.
\end{lem}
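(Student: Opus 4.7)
My plan is to analyze $\nabla^2 r$ along $\gamma$ by Jacobi field calculus. Since $r$ is smooth and $\|\nabla r\|=1$ on a neighborhood of $\gamma|_{(0,T]}$, differentiating $\|\nabla r\|^2=1$ gives $\nabla^2 r(\nabla r,\cdot)=0$, and because $\nabla r|_{\gamma(s)}=\gamma'(s)=e_1(s)$ this kills the first row and column of $\nabla^2 r$ in the given frame, matching the first row and column of the claimed limit matrix.

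For the remaining $(n+m-1)\times(n+m-1)$ block I would introduce two families of geodesic variations of $\gamma$ whose curves all start on $S$ perpendicular to $S$. For $2\le i\le n$, fix the base point and rotate the initial direction by setting
\[
\Gamma_i(t,s)=\exp_{\gamma(0)}\!\Bigl(s\,\tfrac{\gamma'(0)+te_i(0)}{\sqrt{1+t^2}}\Bigr),
\]
which produces a Jacobi field $J_i$ along $\gamma$ with $J_i(0)=0$ and $J_i'(0)=e_i(0)$. For $n+1\le i\le n+m$, instead vary the base point inside $S$ while keeping the initial normal parallel in the normal bundle: pick a curve $c_i\subset S$ with $c_i'(0)=e_i(0)$, let $\nu_i(t)\in N_{c_i(t)}S$ be the unit vector field along $c_i$ that is parallel in the normal connection with $\nu_i(0)=\gamma'(0)$, and set $\Gamma_i(t,s)=\exp_{c_i(t)}(s\nu_i(t))$. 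Then $J_i(0)=e_i(0)$ and $J_i'(0)=\nabla_t\nu_i|_{t=0}$; computing the tangential components via $\langle\nabla_t\nu_i,e_k\rangle=-\langle\nu_i,\mathrm{II}(e_i(0),e_k(0))\rangle$ and the lemma's convention $h=-\mathrm{II}$ then gives $J_i'(0)=\sum_{k=n+1}^{n+m}h_{ik}\,e_k(0)$.

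In both families $\nabla r$ along $\Gamma_i(t,\cdot)$ equals $\partial_s\Gamma_i$, so torsion-freeness $\nabla_t\partial_s=\nabla_s\partial_t$ yields the key identity $\nabla_{J_i(s)}\nabla r=J_i'(s)$. Writing $J_i(s)=\sum_k a_{ki}(s)\,e_k(s)$ in the parallel frame and setting $\mathbf{A}(s)=[a_{ki}(s)]_{2\le k,i\le n+m}$, this is the same as saying that the Hessian operator $S(s)$ on $\gamma'(s)^\perp$ satisfies $S(s)\mathbf{A}(s)=\mathbf{A}'(s)$.

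Feeding the initial data into the Jacobi equation $a''_{ki}=-\sum_l\langle R(e_l,e_1)e_1,e_k\rangle a_{li}$ and Taylor expanding produces the block asymptotics
\[
\mathbf{A}(s)=\begin{pmatrix} sI_{n-1}+O(s^3) & O(s^2)\\ O(s^3) & I_m+sH+O(s^2)\end{pmatrix},\quad \mathbf{A}'(s)=\begin{pmatrix} I_{n-1}+O(s^2) & O(s)\\ O(s^2) & H+O(s)\end{pmatrix},
\]
with $H=[h_{ij}]_{n+1\le i,j\le n+m}$. A routine application of the block-matrix inversion formula to $\mathbf{A}(s)$ and subsequent multiplication then give $S(s)=\mathrm{diag}(s^{-1}I_{n-1},\,H)+O(s)$ in every block, which combined with the vanishing first row and column proves the lemma. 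The main technical point is to check that the off-diagonal blocks of $\mathbf{A}(s)$ vanish to a sufficiently high order in $s$ (namely $O(s^2)$ and $O(s^3)$) so that the dominant $s^{-1}$ contribution from the normal block does not leak into the tangential block under inversion; this is guaranteed precisely by the choice of initial conditions for the two Jacobi field families.
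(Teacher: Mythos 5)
Your argument is correct, but it takes a genuinely different route from the paper. The paper works in Fermi coordinates $F(x,y)=\exp_x\bigl(\sum_{j=1}^n y_j\nu_j\bigr)$ adapted to $S$, where $r^2=\sum_j y_j^2$ is explicit; it expands the parallel frame $e_i$ in the coordinate fields $X_k$, $Y_l$ to order $o(s)$ and reads off the limit of $\nabla^2 r(e_i,e_j)$ block by block, with no Jacobi fields appearing. You instead use the Jacobi-field representation of the Hessian of $r$: your two families of geodesic variations orthogonal to $S$ (one fixing the foot point and rotating the normal direction, one moving the foot point along $S$ while keeping the normal parallel in the normal connection) produce $S$-Jacobi fields with exactly the right initial data $J_i(0)$, $J_i'(0)$; the identity $\nabla_{J_i}\nabla r=J_i'$ converts the Hessian operator on $\gamma'^\perp$ into $\mathbf{A}'\mathbf{A}^{-1}$, and the Taylor expansion forced by the Jacobi equation gives the stated limit. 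Your approach is conceptually cleaner and sits naturally next to the Riccati comparison (Lemma 2.3) used immediately afterwards, and your explicit tracking of the orders of vanishing of the off-diagonal blocks of $\mathbf{A}$ (the $O(s^2)$ and $O(s^3)$ estimates) is precisely what makes the block inversion harmless — this is the one point where a careless version of the argument could fail. The paper's coordinate computation avoids matrix inversion altogether at the cost of more opaque bookkeeping of the frame coefficients $a^k_i$, $b^l_i$. Both proofs are complete; in yours it is worth stating explicitly that $\mathbf{A}(s)$ is invertible for small $s>0$ (no focal points of $S$ along $\gamma$ near $s=0$), which your expansion in fact establishes.
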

\begin{proof}
Let $p=\gamma(0)$ and let $\{\nu_{1},\dots,\nu_{n}\}$ be a unit normal frame of
the normal bundle $T^\perp S$ near $p$ such that $D\nu_i=0$ at $p$ for
$i=1,\cdots,n$, where $D$ is the normal connection given by $D_X \nu=(\nabla_X\nu)^\perp$ for any normal vector field $\nu$ and any $X$ tangent to $S$. Moreover, we may choose $\nu_i$ such that
$\nu_i(p)=e_i(0)$ for $i=1,2,\cdots,n$.

Now let $x_1,\dots,x_m$ be a local coordinates at $p$ in $S$, then
one can parametrize $M$  by $F(x,y)=\exp_{x}(\sum_{j=1}^n y_j\nu_j)$,
where $x=(x_1,\dots,x_m)$, $y=(y_1,\dots,y_n)$. Let
$X_i=\frac{\p}{\p x^i}$, $Y_j=\frac{\p }{\p y_j}$. Then
$r^2(x,y)=\sum_{j=1}^n y_j^2$. We can moreover assume that
$X_i(p)=e_{i+n}(0)$ for $i=1,2,\cdots,m$. Similar to normal
coordinates, we have
\begin{equation}\label{eqn-con-1}
 \nabla_{Y_i}Y_j =0
\end{equation}
on $S$. Extend $e_i$'s to be smooth vector fields near $p$. For any $i$,
$$
e_i=\sum_{k=1}^m a^k_iX_k+\sum_{l=1}^nb^l_i Y_l.
$$
Note that $a^k_i=0$ and $b^l_i=\delta^l_i$ at $s=0$ for $1\le i\le
n$, and $a^k_i=\delta^k_{i-n}$ and $b^l_i=0$ at $s=0$ for $n+1\le
i\le m+n$.

Since $e_i$ is parallel along   $\gamma$,
 by \eqref{eqn-con-1} and the initial conditions of $a^k_i$ and $b^l_i$, we conclude that if $1\le i\le n$, then
 \begin{equation}\label{eqn-con-3}
 a^k_i=o(s), b^l_i-\delta^l_i=o(s)
 \end{equation}
 as $s\to 0$. Using \eqref{eqn-con-1}, the fact that $D_{X_k}Y_1=0$ at $s=0$ and the fact that $X_k$ are tangential at $s=0$,
    if $n+1\le i\le m+n$, then
  \begin{equation}\label{eqn-con-4}
 b^l_i=o(s)
 \end{equation}
as $s\to0$. It is easy to see that on $\gamma$, $||\nabla^2 r||=O(\frac1s)$.  Hence by \eqref{eqn-con-3} for $1\le i, j\le  n$, we have on $\gamma$
\begin{equation}\label{eqn-con-5}
\begin{split}
0=&\lim_{s\to 0}\lf[\nabla^2 r(\gamma(s))(e_i,e_j)-\nabla^2r(\gamma(s))(Y_i,Y_j)\ri]\\
=&\lim_{s\to
0}\Big(\nabla^2r(\gamma(s))(e_i,e_j)-\frac{\delta_{ij}-\delta_{i1}\delta_{1j}}{s}\Big)
\end{split}
\end{equation}
where we have used \eqref{eqn-con-1} and the facts that on $\gamma$, $y_1=s>0$, $y_j=0$ for $2\le j\le n$, and $\nabla r=\gamma'$.

By  \eqref{eqn-con-3} and \eqref{eqn-con-4}, for $n+1\le i\le m+n$ and $ 1\le j\le  n$, we have on $\gamma$

\begin{equation}\label{eqn-con-6}
\begin{split}
\lim_{s\to 0}\nabla^2 r (e_i,e_j)=&\lim_{s\to
0}\sum_{k=1}^ma^k_i\nabla^2r (X_k,Y_j)\\
=&0
\end{split}
\end{equation}
where we have used the fact $r$ does not depend on $x$ and the fact that $DY_j=0$ at $p=\gamma(0)$.

By \eqref{eqn-con-4}, for $n+1\leq i,j\leq n+m$, on $\gamma$ we have
\begin{equation}\label{eqn-con-7}
\begin{split}
\lim_{s\to 0}\nabla^2 r (e_i,e_j)=&\lim_{s\to 0}
\sum_{k,l=1}^ma^k_ia^l_j\nabla^2r(X_k,X_l)\\
=& \la h(e_i,e_j),\gamma'(0)\ra
\end{split}
\end{equation}
where $h(X_i,X_j)=-\lf(\nabla_{X_i}X_j\ri)^\perp$ is the second
fundamental form of $S$. The completes the proof of the lemma.

\end{proof}

We also need the following result in \cite{EH}:
\begin{lem}\label{lem-Riccati-comp}
Let $A(t)$ and $B(t)$ be two smooth curves in the space of $n\times
n$ complex Hermitian matrices on the interval $(0,T)$, satisfying
the equality
\begin{equation}
A'(t)+A^2(t)\leq B'(t)+B^2(t)
\end{equation}
for any $t\in (0,T)$. Suppose
\begin{equation}
\lim_{t\to 0^+} (B(t)-A(t))=0.
\end{equation}
Then
\begin{equation}
A(t)\leq B(t)
\end{equation}
for $t\in (0,T)$. Here $X\le Y$ means that $Y-X$ is positive semi definite.
\end{lem}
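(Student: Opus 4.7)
The plan is to convert the matrix Riccati inequality into a monotonicity statement by conjugation with a fundamental matrix. Set $C(t):=B(t)-A(t)$ and $P(t):=\tfrac12(A(t)+B(t))$; both are smooth Hermitian-matrix-valued functions on $(0,T)$. Using that $A,B$ are Hermitian, one verifies the algebraic identity
\begin{equation*}
B^2-A^2 \;=\; \tfrac12\bigl\{A+B,\;B-A\bigr\} \;=\; PC+CP,
\end{equation*}
so the hypothesis $A'+A^2\le B'+B^2$ rewrites as the matrix Gronwall-type inequality
\begin{equation*}
C'(t)+P(t)C(t)+C(t)P(t)\ge 0 \qquad \text{on }(0,T).
\end{equation*}

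Fix $t_0\in(0,T)$ and let $\Phi(t)$ be the solution on $(0,t_0]$ of the linear matrix ODE
\begin{equation*}
\Phi'(t)=P(t)\Phi(t), \qquad \Phi(t_0)=I.
\end{equation*}
Since $P$ is smooth on $(0,t_0]$ and the equation is linear, $\Phi$ exists and is smooth on all of $(0,t_0]$; Liouville's formula $(\det\Phi)'=\tr(P)\det\Phi$ together with $\det\Phi(t_0)=1$ shows that $\Phi(t)$ is invertible throughout.

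The central computation, using $P^*=P$, is
\begin{equation*}
\frac{d}{dt}\bigl(\Phi(t)^*\,C(t)\,\Phi(t)\bigr)=\Phi(t)^*\bigl(C'(t)+P(t)C(t)+C(t)P(t)\bigr)\Phi(t)\;\ge\;0,
\end{equation*}
so the Hermitian matrix $\Phi(t)^*C(t)\Phi(t)$ is monotone nondecreasing in $t$ on $(0,t_0]$. Consequently, for every $s\in(0,t_0)$,
\begin{equation*}
C(t_0)=\Phi(t_0)^*C(t_0)\Phi(t_0)\;\ge\;\Phi(s)^*C(s)\Phi(s).
\end{equation*}

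Letting $s\to 0^+$ and invoking the hypothesis $C(s)\to 0$ then forces $\Phi(s)^*C(s)\Phi(s)\to 0$, yielding $C(t_0)\ge 0$, i.e.\ $A(t_0)\le B(t_0)$. The main delicate step is this passage to the limit: one must ensure that any blow-up of $\Phi(s)$ near the origin is not fast enough to overwhelm the vanishing of $C(s)$. This is handled either by a direct asymptotic analysis of $\Phi$ (the Riccati inequality together with $C\to 0$ constrains the singular behavior of $P$, hence of $\Phi$, at the origin) or, more flexibly, by a perturbation argument in which $B$ is first replaced by $B+\epsilon\psi(t)I$ for a suitably chosen positive function $\psi$ that makes the inequality strict near $0$, after which one sends $\epsilon\to 0^+$.
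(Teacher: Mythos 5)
Your conjugation mechanism is precisely the one of Eschenburg--Heintze \cite{EH}, which is all the paper itself offers here (the lemma is quoted without proof), so the overall route is the intended one. However, the step you flag as ``delicate'' --- controlling $\Phi(s)$ as $s\to 0^+$ --- is not a technicality that a sharper asymptotic analysis or an $\epsilon$-perturbation can repair: \emph{the lemma as literally stated is false}, so no argument from the stated hypotheses alone can close the gap. Take $n=1$, $a(t)=-\tfrac1t+\tfrac{t^2}{2}$, $b(t)=-\tfrac1t-\tfrac{t^2}{2}$. Both are smooth on $(0,T)$ and (trivially) Hermitian, a direct computation gives $a'+a^2=b'+b^2=\tfrac{2}{t^2}+\tfrac{t^4}{4}$, and $b(t)-a(t)=-t^2\to 0$ as $t\to 0^+$; yet $b-a<0$ everywhere. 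In the language of your proof, $P=\tfrac12(a+b)=-\tfrac1t$, so $\Phi(t)=t_0/t$ and $\Phi(s)^2C(s)=(t_0/s)^2(-s^2)=-t_0^2$ for all $s$: the blow-up of $\Phi$ exactly cancels the vanishing of $C$, and your monotone quantity is identically $-t_0^2$, consistent with $C(t_0)<0$. The same example shows the $B+\epsilon\psi I$ perturbation cannot work unconditionally, since making the inequality strict near $0$ does not prevent $\Phi$ from growing like $1/s$.

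What rescues the lemma in its actual use is a hypothesis that is implicit in the application but missing from the statement: both matrices there have the form $\tfrac1t\Pi+O(1)$ with $\Pi$ a positive semidefinite projection (they are complex Hessians of distance functions and their space-form models, cf.\ Lemma \ref{lem-asym-r-S} and \eqref{initial-e2}), so in particular $A+B\ge -2c\,I$ near $t=0$ for some constant $c$. Under that extra assumption your argument closes cleanly and quantitatively: for any vector $v$, $f(s)=\|\Phi(s)v\|^2$ satisfies $f'=2\langle P\Phi v,\Phi v\rangle\ge -2cf$, so $e^{2cs}f(s)$ is nondecreasing and $\|\Phi(s)\|\le e^{c(t_0-s)}$ for $s<t_0$; hence $\Phi(s)^*C(s)\Phi(s)\to 0$ and $C(t_0)\ge 0$. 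So keep your computation, but add the lower bound on $A+B$ near $t=0$ (or the $\tfrac1t\Pi+O(1)$ asymptotics) as a hypothesis and prove the boundedness of $\Phi$ explicitly; as written, the passage to the limit is a genuine gap, and for the literal statement an unfillable one.
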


\begin{thm}\label{hesscomp-t1}
Let $(M^n,g)$ be a complete K\"ahler manifold with holomorphic
bisectional curvature bounded below by $K$. Let   $S^p$ be a
connected closed complex submanifold of $M$ with complex dimension
$p$. Then within  the cut-locus of $S$,
\begin{equation}\label{eqn-complex-Hess-compare-1}
r_{\abb}\le F_K
\lf(g_{\alpha\bar\beta}-g^{S}_{\alpha\bar\beta}\ri)+G_K r_\a
r_{\bb}+H_Kg^{S}_{\alpha\bar\beta}
 \end{equation}
 where
 \begin{equation}\label{eqn-complex-Hess-compare-2}
 F_K=\left\{\begin{array}{clcr} \sqrt{\frac K2}\cot(\sqrt {\frac K2}r),   &\text{ if  $K>0$;}\\
  \frac1r,     &\text{ if  $K=0$;}\\
   \sqrt{-\frac K2}\coth(\sqrt {-\frac K2}r),   &\text{ if  $K<0$,}\\
 \end{array}\right.,
 \end{equation}
 \begin{equation}\label{eqn-complex-Hess-compare-3}
 G_K=\left\{\begin{array}{clcr}\sqrt{2K}\lf(\cot(\sqrt {2K}r)-\cot(\sqrt {\frac K2}r)\ri) \ \  &\text{ if  $K>0$;}\\
 -\frac1r \ \  &\text{ if  $K=0$;}\\
  \sqrt{-2K}\lf(\coth(\sqrt {-2K}r)-\coth(\sqrt {-\frac K2}r\ri) \ \  &\text{ if  $K<0$,}\\
 \end{array}\right.
 \end{equation}
 and
 \begin{equation}\label{eqn-complex-Hess-compare-4}
 H_K=\left\{\begin{array}{clcr}-\sqrt{K/2}\tan{(\sqrt {K/2}r)} \ \  &\text{ if  $K>0$;}\\
0 \ \  &\text{ if  $K=0$;}\\
  -\sqrt{-K/2}\tanh{(\sqrt {-K/2}r)} \ \  &\text{ if  $K<0$,}\\
 \end{array}\right.
 \end{equation}
  where $g^S$ is the metric of $S$ parallel
 transported along geodesics emanating from $S$ that is orthogonal
 to $S$.
\end{thm}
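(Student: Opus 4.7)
The plan is to convert the evolution equation \eqref{evolutionhess-e1} into a matrix Riccati inequality that fits Lemma \ref{lem-Riccati-comp}, after a conjugation trick that absorbs the ``wrong type'' cross term $r_{\alpha\gamma}r_{\bar\gamma\bar\beta}$. Let $A=[r_{\alpha\bar\beta}]$ and $B=[r_{\alpha\beta}]$ denote the complex Hessian and the $(2,0)$ second-derivative matrices in the parallel unitary frame along $\sigma$, set $M=I+E_{11}$ where $E_{11}$ is the matrix with a single $1$ in entry $(1,1)$, and write $Q=[r_{\alpha\gamma}r_{\bar\gamma\bar\beta}]$. Polarizing the bisectional curvature bound with $Y=e_1$ yields $R:=[R_{\alpha\bar\beta 1\bar1}]\ge KM$ as Hermitian matrices.

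The key algebraic step along $\sigma$: differentiating $\|\nabla r\|^2=1$ in the $e_\alpha$ direction gives $r_{1\alpha}+r_{\bar 1\alpha}=0$, hence (using the scalar commutation $r_{\bar 1\alpha}=r_{\alpha\bar 1}$) $B_{1\alpha}=-A_{\alpha\bar 1}$ on $\sigma$. A short computation then shows that the $\gamma=1$ contribution to $Q$ equals the $\gamma=1$ contribution to $A^2$, so
\begin{equation*}
A^2+Q \,=\, AMA + \tilde Q,\qquad \tilde Q=\sum_{\gamma\ne 1}B_{\alpha\gamma}\bar B_{\gamma\beta}\ge 0.
\end{equation*}
Substituting into \eqref{evolutionhess-e1} and using $R\ge KM$ produces the matrix inequality $A'+AMA\le -\tfrac K2 M$. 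Conjugating by $M^{1/2}=\operatorname{diag}(\sqrt 2,1,\dots,1)$ and setting $\tilde A=M^{1/2}AM^{1/2}$ puts this in standard matrix Riccati form:
\begin{equation*}
\tilde A'+\tilde A^2 \,\le\, -\tfrac K2 M^2.
\end{equation*}

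The model $\tilde{\bar A}$ is taken diagonal and solves $\tilde{\bar A}'+\tilde{\bar A}^2=-\tfrac K2 M^2$, which decouples into scalar Riccati ODEs: $u'+u^2=-2K$ at position $(1,1)$, with solution $\sqrt{2K}\cot(\sqrt{2K}r)$; and $u'+u^2=-K/2$ at every other diagonal slot, with solutions $\sqrt{K/2}\cot(\sqrt{K/2}r)$ or $-\sqrt{K/2}\tan(\sqrt{K/2}r)$ depending on whether the direction is normal or tangent to $S$ (with the hyperbolic analogues for $K<0$ and the Euclidean limits for $K=0$). Undoing the conjugation, $\bar A=M^{-1/2}\tilde{\bar A}M^{-1/2}$ is diagonal with $(1,1)$-entry $\sqrt{K/2}\cot(\sqrt{2K}r)=F_K+G_K/2$ and other diagonal entries $F_K$ or $H_K$; written tensorially in the frame, this is exactly $F_K(g_{\alpha\bar\beta}-g^S_{\alpha\bar\beta})+G_K r_\alpha r_{\bar\beta}+H_K g^S_{\alpha\bar\beta}$.

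To finish, I verify the hypothesis $\lim_{r\to 0^+}(\tilde{\bar A}-\tilde A)=0$ of Lemma \ref{lem-Riccati-comp}. From Lemma \ref{lem-asym-r-S} applied to $\nabla^2 r$ and converted to the complex basis, the $(1,1)$-entry of $A$ behaves like $1/(2r)$ and the other normal diagonal entries like $1/r$, matching $F_K+G_K/2$ and $F_K$ near $r=0$. The tangential block of $A$ tends to zero because for a complex submanifold of a K\"ahler manifold the complex bilinearity $h(JX,Y)=Jh(X,Y)$ together with the symmetry of $h$ forces $h^v(e_\alpha,\bar e_\beta)=0$ for $e_\alpha,e_\beta$ tangent to $S$ and $v$ a real normal vector; this matches $H_K(0)=0$. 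Applying Lemma \ref{lem-Riccati-comp} gives $\tilde A\le \tilde{\bar A}$, and since $M^{1/2}$ is positive definite this is equivalent to $A\le \bar A$, yielding the claimed inequality. The main obstacle is the algebraic identity $A^2+Q=AMA+\tilde Q$: without the radial constraint $r_{1\alpha}=-r_{\alpha\bar 1}$, the cross term $Q$ cannot be expressed through $A$, and the matrix Riccati comparison cannot be closed on $A$ alone.
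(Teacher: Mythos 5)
Your proposal is correct and follows essentially the same route as the paper: the radial identity $r_{\alpha 1}=-r_{\alpha\bar 1}$ to absorb the $(2,0)$ cross term into $AMA$ plus a nonnegative remainder, conjugation by $\operatorname{diag}(\sqrt 2,1,\dots,1)$ to reach a standard matrix Riccati inequality, the explicit diagonal model with the $\cot(\sqrt{2K}r)$, $\cot(\sqrt{K/2}r)$ and $-\tan(\sqrt{K/2}r)$ entries, the small-$r$ asymptotics from Lemma \ref{lem-asym-r-S} together with $h(u,\bar v)=0$ on the complex submanifold, and the comparison Lemma \ref{lem-Riccati-comp}. The only differences are notational (your $AMA+\tilde Q$ is the paper's $BB^*+B_1B_1^*+\sum_{\gamma\ge 2}r_{\alpha\gamma}r_{\bar\gamma\bar\beta}$), and your identification of the $(1,1)$ model entry with $F_K+G_K/2$ checks out.
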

\begin{proof}
We only prove the case   $K>0$ and the   proofs
of the other  two cases are similar.

Let $x_0\in S$ and let $\gamma$ be a geodesic emanating from $x_0$
orthogonal to $S$. Let $e_1,e_2,\cdots,e_n$ be parallel unitary
frame along $\gamma$ with
$e_1=\frac{1}{\sqrt{2}}(\gamma'-\sqrt{-1}J\gamma')$ and
$e_{n-p+1}(0),\cdots,e_{n}(0)$ be tangent to $S$. Then, by Lemma
\ref{lem-asym-r-S} and the fact that $h(u,\bar v)=0$ for $u, v\in T^{1,0}S$,
\begin{equation}\label{initial-e1}
\lim_{r\to 0}\Bigg((r_{\alpha\bar\beta})_{1\leq\alpha,\beta\leq n}-\left(\begin{array}{ccc}\frac{1}{2r}&0&0\\
0&\frac{1}{r}I_{n-p-1}&0\\0&0&0\\
\end{array}\right)\Bigg)=0.
\end{equation}
Since $e_1=\frac{1}{\sqrt 2}(\nabla r-\sqrt{-1}J\nabla r)$,
\begin{equation}\label{eqn-2}
r_{1}=\frac{1}{\sqrt{2}}\ \mbox{and}\ r_{\alpha}=0\ \mbox{for any $\alpha>1$.}
\end{equation}
Moreover, since $(\|\nabla r\|^2)_\alpha=0$, we have
\begin{equation}\label{eqn-4}
r_{\alpha1}=-r_{\alpha \bar 1}.
\end{equation}
Therefore, by Lemma \ref{evolutionhess-e1}, we have
\begin{equation}\label{eqn-hessian}
\frac{d}{dr}r_{\alpha\bar\beta}+r_{\alpha\bar\gamma}r_{\gamma\bar\beta}+r_{\alpha\bar 1}r_{1\bar\beta}+\sum_{\gamma=2}^{n}r_{\alpha\gamma}r_{\bar\gamma\bar\beta}=-\frac{1}{2}R_{\alpha\bar\beta1\bar1}.
\end{equation}
Let $B=(r_{\alpha\bar\beta})$. Since the holomorphic bisectional curvature is bounded below by $K$, we conclude that
\begin{equation}
\frac{d}{dr}B+BB^*+B_1B_1^{*}\leq\left(\begin{array}{cc}-K&0\\
0&-\frac{K}{2}I_{n-1}\\
\end{array}\right).
\end{equation}
where $B_1$ is the first column of $B$. This implies that

\begin{equation}
\frac{d}{dr}\tilde B+\tilde B^2\leq\left(\begin{array}{cc}-2K&0\\
0&-\frac{K}{2}I_{n-1}\\
\end{array}\right)
\end{equation}
where $\tilde B=DBD$ and   $D=\left(\begin{array}{cc}\sqrt 2&0\\
0&I_{n-1}\\
\end{array}\right).$   By \eqref{initial-e1},
 \begin{equation}\label{initial-e2}
\lim_{r\to 0}\Bigg(\tilde B-\left(\begin{array}{ccc}\frac{1}{r}&0&0\\
0&\frac{1}{r}I_{n-p-1}&0\\0&0&0\\
\end{array}\right)\Bigg)=0.
\end{equation}
Note that
\begin{equation}
X=\left(\begin{array}{ccc}\sqrt{2K}\cot(\sqrt {2K} r)&0&0\\
0&\sqrt{\frac K2}\cot(\sqrt {\frac K2} r)I_{n-p-1}&0\\
0&0&-\sqrt{\frac K2}\tan(\sqrt{\frac K2}r)I_p
\end{array}\right)
\end{equation}
is a solution of
  \begin{equation}
\frac{d}{dr}X+X^2=\left(\begin{array}{cc}-2K&0\\
0&-\frac{K}{2}I_{n-1}\\
\end{array}\right)
\end{equation}
with initial condition \eqref{initial-e2}. By Lemma
\ref{lem-Riccati-comp}, $\tilde B\le X$. By the definition of
$\tilde B$, we conclude that the theorem is true for the case $K>0$.
The other cases are similar.
\end{proof}
In case $S$ is  a point $x_0\in M$, then it is understood that $g^S$ is zero. So within the cut locus of $x_0$, we have
$$
r_{\abb}\le F_K g_{\abb}+G_Kr_\a r_{\bb}.
$$

Next, we want to discuss the equality case.

\begin{thm}\label{hesscomp-t2} With the same assumptions in the Theorem \ref{hesscomp-t1}.
\begin{enumerate}
  \item [(i)] Suppose equality holds in \eqref{eqn-complex-Hess-compare-1} in a neighborhood of $S$, then $S$ is totally geodesic.
  \item [(ii)] If $S=x_0$ is a point and equality holds in \eqref{eqn-complex-Hess-compare-1} at all points in  the geodesic ball $B_{x_0}(r)$ which are within the cut locus of $x_0$, then $B_{x_0}(r)$ is holomorphically isometric to the geodesic ball of radius $r$ in the simply connected K\"ahler manifold with constant holomorphic sectional curvature $2K$. Here if $K>0$, it is assumed that $r<\frac{\pi}{\sqrt{2K}}$.
\end{enumerate}
\end{thm}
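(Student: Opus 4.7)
The plan is to revisit the proof of Theorem \ref{hesscomp-t1} and extract rigidity from the equality case. In passing from \eqref{eqn-hessian} to the matrix Riccati inequality $\frac{d}{dr}\tilde B + \tilde B^2 \le X' + X^2$ used there, two inequalities entered: first, the positive semidefinite term $\sum_{\gamma=2}^{n} r_{\alpha\gamma} r_{\bar\gamma\bar\beta}$ was discarded; second, $-R_{\alpha\bar\beta 1\bar 1} \le -K(g_{\alpha\bar\beta} + \delta_{\alpha 1}\delta_{1\beta})$ was invoked via $BK_M \ge K$. When $\tilde B(r) = X(r)$ on an open radial interval, Lemma \ref{lem-Riccati-comp} applied in both directions forces each of these to be an equality along that interval: $r_{\alpha\gamma} \equiv 0$ for all $\alpha$ and all $\gamma \ge 2$, and $R_{\alpha\bar\beta 1\bar 1} \equiv K(g_{\alpha\bar\beta} + \delta_{\alpha 1}\delta_{1\beta})$.

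For part (i), I send $r \to 0^+$ along every geodesic $\sigma$ emanating orthogonally from a point $x_0 \in S$. The key additional ingredient is a $(2,0)$-analog of Lemma \ref{lem-asym-r-S} identifying, for indices $\alpha,\gamma \in \{n-p+1,\dots,n\}$ (tangent to $S$ at $t=0$), the limit $\lim_{r\to 0^+} r_{\alpha\gamma}$ with the $(2,0)$ component of the second fundamental form $h$ of $S$ evaluated at $\sigma'(0)$. This analog follows from the same adapted-coordinate construction as in Lemma \ref{lem-asym-r-S}, using $J$-invariance of $TS$ and of its normal bundle to choose a $\mathbb{C}$-linearly compatible frame. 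Combined with $r_{\alpha\gamma} \equiv 0$ from the previous paragraph, this gives $h^{(2,0)} \equiv 0$. On any complex submanifold of a K\"ahler manifold, the $(1,1)$ component of the second fundamental form vanishes automatically (in local holomorphic coordinates adapted to $S$, $\nabla_X \bar Y = 0$ when $X, Y$ are holomorphic sections of $T^{(1,0)}S$), and $h^{(0,2)}$ is the conjugate of $h^{(2,0)}$; hence $h \equiv 0$ and $S$ is totally geodesic.

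For part (ii), where $S = \{x_0\}$ and equality holds throughout $B_{x_0}(r)$, the equalities of the first paragraph applied along each radial geodesic, combined with \eqref{eqn-4} (giving $r_{11} = -r_{1\bar 1}$) and the explicit form of $X$, determine the full complex Hessian of the distance function along every radial geodesic; consequently $\nabla^2 r$ agrees along each radial geodesic with the Hessian of the distance function in the K\"ahler space form $M_K$ of constant holomorphic sectional curvature $2K$. Fix a $\mathbb{C}$-linear isometry $\Psi: T_{x_0}M \to T_{\tilde x_0}M_K$ between tangent spaces at base points, and define $\Phi := \exp_{\tilde x_0} \circ \Psi \circ \exp_{x_0}^{-1}$ on $B_{x_0}(r)$, a diffeomorphism onto $B_{\tilde x_0}(r) \subset M_K$. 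Along each radial geodesic, Jacobi fields vanishing at the center are determined by the Jacobi operator together with their initial derivative, and the induced metric on the geodesic sphere is read off from these Jacobi fields; since $\nabla^2 r$ agrees, the Jacobi-field ODE yields the same solutions under $\Phi$, so $\Phi$ is a Riemannian isometry. Because $J$ is parallel along geodesics in any K\"ahler manifold and $\Psi$ is $\mathbb{C}$-linear, the parallel transports of $J$ from $x_0$ and from $\tilde x_0$ are intertwined by $\Phi$; hence $\Phi$ is $J$-linear at every point, i.e., a holomorphic isometry onto $B_{\tilde x_0}(r)$.

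The principal difficulty is the asymptotic identification in part (i): establishing the $(2,0)$ analog of Lemma \ref{lem-asym-r-S} that reads off $h^{(2,0)}$ from $\lim_{r\to 0^+} r_{\alpha\gamma}$. A secondary delicate point, in part (ii), is promoting the Riemannian isometry $\Phi$ to a holomorphic one; this rests on the K\"ahler parallelism of $J$ combined with the $\mathbb{C}$-linearity of the initial isomorphism $\Psi$.
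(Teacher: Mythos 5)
Your proposal is correct and follows essentially the same route as the paper: equality in the Riccati comparison forces $r_{\alpha\gamma}=0$ for $\gamma\ge 2$ together with the curvature identity $R_{\alpha\bar\beta 1\bar 1}=K(g_{\alpha\bar\beta}+\delta_{\alpha 1}\delta_{1\beta})$; part (i) then follows from the asymptotics of Lemma \ref{lem-asym-r-S} plus the automatic vanishing of the $(1,1)$-part of $h$, and part (ii) from matching the Jacobi operator with that of the space form (the paper extracts $R_{\alpha\bar 1\beta\bar 1}=0$ from the second Riccati equation \eqref{evolutionhess-e2}, which is equivalent to your use of the real Riccati equation for the full $\nabla^2 r$) followed by the Cartan--Ambrose--Hicks construction and the parallelism of $J$ to upgrade to a holomorphic isometry.
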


\begin{proof} The proof is similar to the Riemannian case (see
\cite{Klingenberg,CE}).

 (i)  When equality of \eqref{eqn-complex-Hess-compare-1} holds near $S$, we know
that
\begin{equation}\label{equality-e1}
r_{\alpha\beta}=0
\end{equation}
for all $\alpha, \beta\ge2$ near $S$. By Lemma \ref{lem-asym-r-S} and the fact that $h(u,\bar v)=0$ for $u, v\in T^{1,0}S$, we know that the second fundamental form $h$ of $S$ is
zero, hence $S$ is totally geodesic.

 (ii)  We only prove the case $K>0$, the other two cases are similar. Under the assumptions of the theorem, at all points within the cut locus of $x_0$,  we have
\begin{equation}\label{eqn-curvature}
(R_{\alpha\bar \beta 1\bar 1})=\left(\begin{array}{cc}2K&0\\
0&KI_{n-1}\\
\end{array}\right)
\end{equation}
and
\begin{equation}\label{eqn-hessian-r2}
 (r_{\alpha\beta})= \left(\begin{array}{cc}-\frac{\sqrt{2K}}{2}\cot(\sqrt {2K} r)&0\\
0&0_{n-1}\\
\end{array}\right).
\end{equation}
By Lemma \ref{evolutionhess-l1}, we have
\begin{equation}\label{eqn-hessian-2}
\frac{d}{dr}r_{\alpha\beta}+r_{\gamma\alpha}r_{\bar\gamma\beta}+r_{\gamma\beta}r_{\bar\gamma\alpha}=R_{\alpha\bar\delta\beta\bar\gamma}r_{\delta}r_{\gamma}=\frac{1}{2}R_{\alpha\bar
1\beta\bar 1}.
\end{equation}
\begin{equation}\label{eqn-curvature-2}
R_{\alpha\bar1\beta\bar 1}=0 \mbox{ if $\alpha\neq 1$ or $\beta\neq
1$}
\end{equation}
by substituting the equality in \eqref{eqn-complex-Hess-compare-1}  and \eqref{eqn-hessian-r2}
into \eqref{eqn-hessian-2}. Let $J=x_{\alpha}e_\alpha+\bar x_{\beta}\bar e_\beta$ be a Jacobi
field along a geodesic emanating from $p$. Then
\begin{equation}
x''_{\alpha}=\vv<R(\frac{e_1+\bar e_1}{\sqrt
2},x_{\beta}e_\beta+\bar x_{\gamma}\bar e_\gamma)\frac{e_1+\bar
e_1}{\sqrt 2},\bar e_\alpha>=\frac{1}{2}(R_{1\bar \gamma 1\bar
\alpha}\bar x_\gamma-R_{\beta\bar \alpha 1\bar 1}x_\beta)
\end{equation}
This is the same as the Jocobi field equation on the space form with
constant holomorphic  sectional curvature $2K$ because of
\eqref{eqn-curvature} and \eqref{eqn-curvature-2}. Let $o\in \CP^n$
with K\"ahler metric with constant holomorphic sectional curvature
$2K$. Let $I$ be an isometry from $T_{x_0}(M)\to T_o(\CP^n)$ such
that $I$ is holomorphic. Then suppose $r_0<$ injectivity radius of
$x_0$ and $r_0<r$, then the map $\exp_o\circ I\circ \exp_{x_0}^{-1}
:B_{x_0}(r_0)\to B_o(r_0)$ is an isometry by the proof of
Catan-Ambrose-Hicks Theorem \cite{Klingenberg}. Since the
injectivity radius of $o$ is $\frac{\pi}{\sqrt{2K}}$, it is easy to
see that $\phi=\exp_o\circ I\circ \exp_{x_0}^{-1} $ can be extended
to be an isometry from $B_{x_0}(r)$ to $B_o(r)$. We may arrange so
that $ \phi^*J_0=J_M$ at $x_0$ where $J_0$ is the complex structure
of $\CP^n$.  Since $\phi$ is an isometry, $\nabla(J_M-\phi^*J_0)=0$.
Hence $\phi$ is holomorphic.

\end{proof}

\begin{rem} By the theorem   and the proofs in \cite{LW}, we may obtain   results on the equality case for the volume
comparison and first Dirichlet eigenvalue comparison for geodesic
balls in \cite{LW}.
\end{rem}

\section{Eigenvalue comparison}

Consider the first nonzero eigenvalue of compact K\"ahler manifold
$(M^n,g)$. It is well known that if the Ricci curvature of $M$
satisfies $\Ric \ge k g$ for some $k>0$, then the first nonzero
eigenvalue of the complex Laplacian is bounded below by $k$ (see
\cite[Theorem 2.4.5]{Futaki1988}). Here the complex Laplacian of a
function $u$ is given by
$$\Delta u=g^{\abb}u_{\a \bb}.
$$
Let $\Delta^\R$ be the Laplacian of the underlining Riemannian metric, then $\Delta=\frac 12\Delta^\R$. We want to discuss the equality case.

\begin{thm} Let $(M^n,g)$ be a compact K\"ahler manifolds  with $\Ric \ge kg$, $k>0$. Suppose the first nonzero eigenvalue $\lambda$ for the complex Laplacian is $k$ and that the Ricci form $\rho$ and   K\"ahler form $\omega$ satisfy $[\rho]=c[\omega]$ for some $c$. Then $M$ is K\"ahler-Einstein.
 \end{thm}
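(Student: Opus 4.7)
The plan is to exploit the equality case of the K\"ahler Lichnerowicz-type bound $\lambda_1\ge k$ (which is Futaki's theorem): when $\lambda=k$ the Bochner computation forces both $u_{\a\b}\equiv 0$ (so $\xi:=\nabla^{1,0}u$ is a holomorphic vector field) and the pointwise Ricci eigenvector equation $R_\abb u^\a=ku_\bb$. To see this, let $u$ be a real eigenfunction with $\Delta u=-ku$ and set $v=\p u$. Because $\p v=\p^{2} u=0$ and $\p^{*}v=-\Delta u=ku$, the K\"ahler identity $\Delta_\p=\Delta_{\dbar}$ gives $\dbar^{*}\dbar v=kv$, so pairing with $v$ yields $\int_M |u_\abb|^2\,dV=k\int_M|\p u|^2\,dV$. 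Substituting this into the integrated K\"ahler Bochner formula for $|\p u|^2$ (with $\Delta u=-ku$ inserted) reduces it to
\[
k\int_M|\p u|^2\,dV=\int_M|u_{\a\b}|^2\,dV+\int_M R_{\abb}u^\a u^\bb\,dV,
\]
and $R_\abb\ge kg_\abb$ then forces $\int|u_{\a\b}|^2=0$ and $R_{\abb}u^\a u^\bb=k|\p u|^2$ pointwise; the latter, by positive semi-definiteness of $R_\abb-kg_\abb$, upgrades to $R_\abb u^\a=ku_\bb$ everywhere.

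Next I would bring in the cohomology hypothesis. By the $\ddbar$-lemma applied to $\rho-c\omega$, choose a real smooth $F$ with $R_\abb-cg_\abb=F_\abb$; the Ricci eigenvector equation then reads $F_\abb\xi^\a=(k-c)u_\bb$. Because $\xi$ is holomorphic, $\nabla_\bb(\xi F)=(\nabla_\bb\xi^\a)F_\a+\xi^\a F_\abb=\xi^\a F_\abb=(k-c)u_\bb$, so $\dbar\bigl(\xi F-(k-c)u\bigr)=0$. Hence $\xi F-(k-c)u$ is a holomorphic function on the compact manifold $M$ and must equal some constant $C$, giving
\[
\xi F=(k-c)u+C \qquad \text{on } M.
\]
The linchpin is now a critical-point argument: at any critical point $p$ of $u$ we have $u_\bb(p)=0$, hence $\xi(p)=0$ and $(\xi F)(p)=0$, so $(k-c)u(p)+C=0$. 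Since $M$ is compact, $u$ attains its maximum and minimum at critical points, yielding $(k-c)u_{\max}=(k-c)u_{\min}=-C$. The eigenfunction $u$ is non-constant (it satisfies $\int_M u\,dV=0$ with nonzero eigenvalue), so $u_{\max}>u_{\min}$, which forces $c=k$.

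With $c=k$, taking the trace of $R_\abb\ge kg_\abb$ gives $R\ge nk=nc$ pointwise, while $[\rho]=c[\omega]$ integrates to $\int_M R\,dV=ncV$, so $R\equiv nk$ on $M$. Then $\Delta F=R-nc=0$, so $F$ is harmonic on the compact manifold and hence constant; consequently $R_\abb=cg_\abb=kg_\abb$, and $(M,g)$ is K\"ahler-Einstein. The technical work is concentrated in the first step, where the K\"ahler Hodge identities have to be threaded through the Bochner formula just so that $\lambda=k$ yields simultaneously the holomorphicity of $\xi$ and the pointwise Ricci eigenvector equation. Once that structural information is in hand, the proof is driven by the elementary but decisive observation that at critical points of $u$ the identity $\xi F=(k-c)u+C$ collapses to $(k-c)u_{\mathrm{crit}}=-C$, and that alone pins down $c=k$.
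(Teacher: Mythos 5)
Your proof is correct and follows essentially the same route as the paper's: your holomorphic function $\xi F-(k-c)u$ is exactly the paper's $\phi=\Delta u+cu+g^{\abb}F_\a u_{\bb}$ (since $\Delta u=-ku$), and the evaluation at the maximum and minimum of $u$ to force $c=k$, followed by the conclusion that $F$ is constant, is identical. The only cosmetic differences are that you extract the pointwise identity $R_{\abb}\xi^\a=ku_{\bb}$ from the rigidity of the integral inequality (the paper obtains the same fact by commuting derivatives in $(\Delta u)_{\bg}=-ku_{\bg}$ using $u_{\a\b}=0$), and you deduce $F\equiv\mathrm{const}$ from harmonicity rather than plurisubharmonicity.
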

\begin{proof}
   Let us recall the proof that $\lambda\ge k$.
Let $u$ be a first eigenfunction. If $\{e_\a\}$ is a unitary frame, then
 Then
\begin{equation}
\begin{split}
-\lambda\int_M|\nabla u|^2=&-\lambda \int_M \sum_\a u_\a u_{\bar \a}\\
=&\int_M \sum_\a (\Delta u)_\a u_{\bar \a}\\
=&\int_M \sum_{\a,\b} \lf( u_{\b\bb}\ri)_\a u_{\bar \a}\\
=&\int_M \sum_{\a,\b} \lf (u_{\b\a\bb}-R_{\b\bg \a\bb}u_\g\ri)u_{\ba}\\
=&-\lf(\int_M u_{\a\b}u_{\ba\bb}+R_{\a\bg}u_\g u_{\ba}\ri)
\end{split}
\end{equation}
Hence
$$
\lambda\int_M|\nabla u|^2\ge k\int_M|\nabla u|^2
$$
and $\lambda\ge k$.
Hence if $\lambda=k$, then $u_{\a\b}=0$. By assumption, there is a positive number $c>0$ and a function $F$
such that
 \begin{equation}\label{potential-e1}
   R_{\abb}-cg_{\abb}=F_{\abb}.
    \end{equation}

    Let $\phi=\Delta u+cu+g^{\abb}F_\a u_{\bb}$. As in
\cite[p.42]{Futaki1988}, one can prove that $\phi$ is holomorphic.
In fact, in a normal coordinate at a point,

\begin{equation}\label{holomorphic-e1}
    \begin{split}
    \phi_{\bg}=& u_{\a\ba\bg}+cu_{\bg}+F_{\a\bg}u_{\ba }+F_\a u_{\ba\bg}\\
    =&u_{\ba\a\bg}+cu_{\bg}+F_{\a\bg}u_{\ba }\\
    =&u_{\ba\bg\a }-R_{\a\bg}u_{\bar \a} +cu_{\bg}+F_{\a\bg}u_{\ba }\\
    =& \lf(-c g_{\a\bg}-F_{\a\bg}\ri) u_{\ba}+cu_{\bg}+F_{\a\bg}u_{\ba }\\
    =&0,
    \end{split}
\end{equation}
where we have used the fact that $u_{\a\b}=0$ and
\eqref{potential-e1}. Hence $\phi=a$ is a constant. This implies,
together with the fact that  $\Delta u=-ku$,
$$
(-k+c)u+g^{\abb}F_\a u_{\bb}=a.
$$
Let $S=\max u$, and $s=\min u$. Evaluate the above equality at the
maximum and the minimum points, we have
$$
(-k+c)S=a, (-k+c)s=a.
$$
So $k=c$ and $a=0$ because $u$ is nonconstant. Hence \eqref{potential-e1} becomes:

$$
R_{\abb}-kg_{\abb}=F_{\abb}.
$$
Since $R_{\abb}\ge kg_{\abb}$, $F$ is plurisubarmonic and is a
constant. Hence
$$
R_{\abb}-kg_{\abb}=0
$$
and $M$ is K\"ahler-Einstein.

\end{proof}
\begin{cor}\label{cor-first-eigen}
Let $(M^n,g)$ be a K\"ahler manifold with positive holomorphic bisectional
curvature such that $\Ric\ge kg$. Suppose  the first nonzero eigenvalue of the complex
Laplacian equal to $k$, then   $M$ is holomorphically isometric to $\CP^n$ with the Fubini-Study metric.
\end{cor}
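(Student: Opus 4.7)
The plan is to combine the preceding theorem with two well-established structural results for compact K\"ahler manifolds of positive holomorphic bisectional curvature. First, $\Ric\ge kg$ with $k>0$ together with Myers' theorem imply that $M$ is compact. By the classical theorem of Bishop-Goldberg, a compact K\"ahler manifold with positive holomorphic bisectional curvature has $b_2(M)=1$. Since $h^{1,1}(M)\ge 1$ (as $[\omega]\neq 0$) and $h^{1,1}(M)\le b_2(M)$, we obtain $H^{1,1}(M,\R)=\R[\omega]$. In particular the Ricci form $\rho$ is cohomologous to a constant multiple of $\omega$, so the hypotheses of the preceding theorem are met, and we conclude that $(M,g)$ is K\"ahler-Einstein with $R_{\abb}=kg_{\abb}$.

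Next, I would invoke the solution of the Frankel conjecture (independently by Mori and by Siu-Yau): a compact K\"ahler manifold of positive holomorphic bisectional curvature is biholomorphic to $\CP^n$. Hence $M$ is Fano, and $g$ is a K\"ahler-Einstein metric on $\CP^n$ with positive Einstein constant $k$. By the Bando-Mabuchi uniqueness theorem for K\"ahler-Einstein metrics on Fano manifolds, any two such metrics differ by a biholomorphism of the underlying complex manifold; since the Fubini-Study metric is one such metric, $(M,g)$ must be holomorphically isometric to $(\CP^n,h)$ where $h$ is an appropriately rescaled Fubini-Study metric.

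The main obstacle is conceptual rather than computational: the argument depends on two substantial external inputs, namely the Frankel conjecture (to identify the underlying complex manifold) and Bando-Mabuchi uniqueness (to upgrade biholomorphism to a holomorphic isometry). Neither is reproved here; the role of the corollary is to package these inputs with the main theorem of the section. A small bookkeeping check is the normalization: the Fubini-Study metric of constant holomorphic sectional curvature $c>0$ satisfies $\Ric=\tfrac{(n+1)c}{2}g$, so the scaling that matches $\Ric=kg$ corresponds to $c=2k/(n+1)$.
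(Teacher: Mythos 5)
Your argument is correct, and its first half coincides with the paper's: both use the Bishop--Goldberg theorem to get $b_2(M)=1$ so that $[\rho]=c[\omega]$ holds automatically, and then apply the preceding theorem to conclude that $g$ is K\"ahler--Einstein. Where you diverge is in the final step. The paper invokes Berger's classical theorem (via Goldberg--Kobayashi): a compact K\"ahler--Einstein manifold with positive holomorphic bisectional curvature is already holomorphically isometric to $\CP^n$ with the Fubini--Study metric, in one stroke. You instead first identify the complex manifold via the solution of the Frankel conjecture (Mori, Siu--Yau) and then appeal to Bando--Mabuchi uniqueness of K\"ahler--Einstein metrics on Fano manifolds to pin down the metric up to biholomorphism. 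Both routes are valid; yours is logically sound but uses substantially heavier machinery (two deep theorems from 1979--1987) where the paper gets by with a single 1965 result tailored exactly to this situation. Your added remarks --- that completeness plus $\Ric\ge kg$ with $k>0$ gives compactness via Myers, and the normalization check $\Ric=\tfrac{(n+1)c}{2}g$ for holomorphic sectional curvature $c$ --- are correct and make explicit details the paper leaves implicit.
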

\begin{proof} Since
  $b_2(M)=1$ (see \cite {BishopGoldberg1965,GoldbergKobayashi1967}),   the condition $[\rho]=c[\omega]$ in the   theorem is
automatically satisfied. Hence $g$ is K\"ahler-Einstein and by the
theorem of Berger (see \cite{Berger,GoldbergKobayashi1967}) $M$ is
holomorphically isometric to $\CP^n$ with the Fubini-Study metric.
\end{proof}
As an application, we have the following partial result for the equality case of diameter comparison result by   \cite{LW}.
\begin{cor}\label{diamter-t}
Let $M^n$ be a compact K\"ahler manifold with holomorphic
bisectional curvature $\geq 1$.
 Suppose there exist compact connected complex submanifolds   $P$ of dimension  $s$ and $Q$ of dimension $n-1-s$ in $M$ such that $d(P,Q)=\frac{\pi}{\sqrt 2}$ with some $0\leq s\leq n-1$. Then $M$ is holomorphically isometric to $\CP^n$ with the Fubini-Study metric.
\end{cor}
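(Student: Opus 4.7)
My plan is to reduce to Corollary \ref{cor-first-eigen} by constructing a nonconstant eigenfunction of $\Delta$ with eigenvalue $n+1$. Testing $BK_M \geq 1$ against unit vectors in a unitary frame gives $R_{\a\bar\a\b\bar\b} \geq 1 + \delta_{\a\b}$, hence $\Ric \geq (n+1) g$, and so $\lambda_1(\Delta) \geq n+1$ by the standard Futaki bound \cite{Futaki1988}. Li-Wang's diameter estimate \cite{LW} combined with the hypothesis $d(P,Q) = \pi/\sqrt 2$ forces $\mathrm{diam}(M) = \pi/\sqrt 2$, so $r_Q|_P \equiv r_P|_Q \equiv \pi/\sqrt 2$. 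It suffices to show $\lambda_1 \leq n+1$.

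Set $c_s := (n-2s-1)/(n+1)$. Tracing Theorem \ref{hesscomp-t1} with $K=1$ (using $g^{\a\bb}g_{\a\bb}=n$, $g^{\a\bb}g^S_{\a\bb}=\dim_{\C} S$, and $g^{\a\bb}(r)_\a(r)_\bb=\tfrac 12$), together with the identities $\sin(\sqrt 2 r)\cot(r/\sqrt 2) = 1+\cos(\sqrt 2 r)$, $\sin(\sqrt 2 r)\tan(r/\sqrt 2) = 1 - \cos(\sqrt 2 r)$, $\sin(\sqrt 2 r)\cot(\sqrt 2 r) = \cos(\sqrt 2 r)$, gives the clean barrier-sense inequality
\[
\sqrt 2 \sin(\sqrt 2 r_P)\,\Delta r_P \leq (n-2s-1) + n\cos(\sqrt 2 r_P)
\]
(positivity of $\sin(\sqrt 2 r_P)$ on $[0,\pi/\sqrt 2]$ is crucial). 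Applying this to $u := \cos(\sqrt 2 r_P) + c_s$ via $\Delta u = -\cos(\sqrt 2 r_P) - \sqrt 2\sin(\sqrt 2 r_P)\Delta r_P$ yields $\Delta u + (n+1) u \geq 0$ in the barrier sense. The analogous argument for $r_Q$ (replacing $s$ by $n-1-s$) gives $\Delta v + (n+1) v \geq 0$ for $v := \cos(\sqrt 2 r_Q) - c_s$; note the shifts are equal and opposite.

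Let $w := u + v = \cos(\sqrt 2 r_P) + \cos(\sqrt 2 r_Q)$. By the sum-to-product formula, the triangle inequality $r_P + r_Q \geq \pi/\sqrt 2$, and the bounds $r_P, r_Q \leq \pi/\sqrt 2$,
\[
w = 2\cos\!\Bigl(\tfrac{r_P+r_Q}{\sqrt 2}\Bigr)\cos\!\Bigl(\tfrac{r_P-r_Q}{\sqrt 2}\Bigr) \leq 0,
\]
with equality along any minimizing geodesic from $P$ to $Q$. Summing the barrier inequalities gives $\Delta w + (n+1) w \geq 0$; since $w \leq 0$ this makes $w$ subharmonic in the barrier sense, and the interior maximum value $0$ forces $w \equiv 0$ by Calabi's strong maximum principle. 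Hence $v = -u$; substituting back, the $v$-inequality becomes $\Delta u + (n+1) u \leq 0$ in the barrier sense, which with the reverse inequality shows $u$ is a continuous viscosity solution of $\Delta u + (n+1) u = 0$. By elliptic regularity $u$ is smooth on $M$ and $\Delta u = -(n+1) u$ classically. Since $u|_P = 2(n-s)/(n+1) > 0$ and $u|_Q = -2(s+1)/(n+1) < 0$, $u$ is nonconstant, giving $\lambda_1 \leq n+1$; Corollary \ref{cor-first-eigen} (applicable since $BK > 0$) then concludes.

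The main delicate step is controlling the barrier/viscosity inequalities across the cut loci of $P$ and $Q$ well enough to apply Calabi's strong maximum principle and then upgrade the viscosity solution to a smooth one. These are standard techniques for distance-function Laplacian comparisons, but care is needed because the smooth representations $u = \cos(\sqrt 2 r_P)+c_s$ and $u = -\cos(\sqrt 2 r_Q)+c_s$ are simultaneously available only on $M\setminus (\mathrm{Cut}(P)\cap\mathrm{Cut}(Q))$.
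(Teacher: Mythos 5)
Your argument is correct and lands on the same reduction as the paper---both proofs establish $\lambda_1=n+1$ and then invoke Corollary \ref{cor-first-eigen}---but the route to the upper bound $\lambda_1\le n+1$ is genuinely different. The paper quotes the Miquel--Palmer comparison to bound the first Dirichlet eigenvalues of the tubes $B_P(r_0)$ and $B_Q(d-r_0)$ by those of the model pair $P_0,Q_0\subset\CP^n$ (both equal to $n+1$), notes the two tubes are disjoint because $d(P,Q)=\pi/\sqrt 2$, and applies Cheng's two-domain argument; no global eigenfunction is produced and nothing beyond disjointness of the tubes is needed. You instead trace the paper's own Theorem \ref{hesscomp-t1} (your trace identities, the resulting inequality $\sqrt 2\sin(\sqrt 2 r_P)\Delta r_P\le (n-2s-1)+n\cos(\sqrt 2 r_P)$, and the cancellation $c_{n-1-s}=-c_s$ all check out), build explicit global barrier subsolutions $u,v$ of $\Delta+(n+1)$, and use the sign of $w=u+v$, its vanishing along a $P$--$Q$ minimizing geodesic, and Calabi's strong maximum principle to force $w\equiv 0$, so that $u$ becomes an honest nonconstant eigenfunction. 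What your approach buys: it is self-contained relative to Section 2 of the paper (no appeal to Miquel--Palmer), and it exhibits the extremal eigenfunction explicitly, recovering along the way that $r_P+r_Q\equiv\pi/\sqrt 2$. What it costs: you need the Li--Wang diameter bound $\mathrm{diam}(M)\le\pi/\sqrt 2$ globally (to keep $\sin(\sqrt 2 r_P)\ge 0$ and to control the signs in the sum-to-product step; note this does not follow from Bonnet--Myers and must genuinely be cited from \cite{LW}), and you must carry the barrier inequalities across the cut loci of $P$ and $Q$ and then upgrade the resulting two-sided viscosity solution to a classical one by elliptic regularity---all standard, and you correctly flag it as the delicate point, but it is machinery the paper's shorter argument avoids entirely.
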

\begin{proof}
Let $[\xi_0:\xi_1:\cdots:\xi_n]$ be the homogeneous coordinate of
$\CP^n$ (equipped with the Fubini-Study metric with constant
holomorphic  sectional curvature 2). Let
$P_0=\{[\xi_0:\xi_1:\cdots:\xi_s:0:\cdots:0]|\xi_0,\cdots\xi_s\in
\C\}\subset \CP^n$ and
$Q_0=\{[0:\cdots:0:\xi_{s+1}:\cdots:\xi_n]|\xi_{s+1},\cdots,\xi_n\in
\C\}\subset \CP^n$. Straight forward computations show:  $d(P_0,Q_0)=d:=\frac{\pi}{\sqrt 2}$ and
\begin{equation}\label{eqn-r_P0-r_Q0}
r_{P_0}(x)+r_{Q_0}(x)=d
\end{equation}
for any $x$, where $r_{P_0}$ and $r_{Q_0}$ are distance functions from $P_0$ and $Q_0$ respectively. Moreover, the first Dirichlet
eigenvalues  of $B_{P_0}(r_0)$ and $B_{Q_0}(d-r_0)$ are both $n+1$. Here for a submanifold $S$ in a K\"aher manifold, $B_S(R)$ consists of points $x$ with  $d(S,x)<R$.

Denote $\lambda_1(N)$  the first Dirichlet eigenvalue of the complex Laplacian of $N$.  By
\cite{MiquelPalmer}, we have
$$
\lambda_1(B_{P}(r_0))\le \lambda_1(B_{P_0}(r_0))= n+1;
$$
$$
\lambda_1(B_{Q}(d-r_0))\le \lambda_1(B_{Q_0}(d-r_0))= n+1.
$$
Since $d(P,Q)=d$, $B_{P}(r_0)\cap B_{Q}(d-r_0)=\emptyset$. By the
same argument as in \cite[Theorem 2.1]{Ch}, we know that the first
nonzero eigenvalue $\lambda$ of $M$ is at most $n+1$. Since the
Ricci curvature of $M$ is bounded below by $n+1$, we have
$\lambda=n+1$. By Corollary \ref{cor-first-eigen},   $M$ is
holomorphically isometric to $\CP^n$ with the Fubini-Study metric.

\end{proof}

\section{Volume comparison}

Let $(M^n,\omega)$ be a compact K\"ahler manifold such that the
first Chern class $c_1(M)$ is either positive or negative, where
$\omega$ is the K\"ahler form that is a multiple of the first Chern
class as a cohomology class. If $c_1(M)<0$ then there is a unique
K\"ahler-Einstein metric $\omega_0$ such that
$\Ric(\omega_0)=-\omega_0$ by well-known results (see
\cite{Aubin,Yau}).

\begin{prop}\label{volume} With the above notations and assumptions, the following are true:
\begin{itemize}
  \item [(i)] If $c_1(M)<0$ and if the scalar curvature $R$ of $\omega$ satisfies
 $-nb\leq R\leq -na<0$, then the volume of $V(M,\omega)$ of $(M,\omega)$ is
 bounded by
 \begin{equation}
 V(M,\frac1b \omega_0)\leq V(M,\omega)\leq  V(M,\frac1a\omega_0).
\end{equation}
If the first (respectively the second) equality holds, then $\omega=\frac1b\omega_0$ (respectively $\omega=\frac1a\omega_0$).
  \item [(ii)] If $c_1(M)>0$ and there is a K\"ahler-Einstein metric $\omega_0$ such that $Ric(\omega_0)=\omega_0$,
      and if the scalar curvature $R$ of $\omega$ satisfies
 $ 0<nb\leq R\leq  na $, then the volume of $V(M,\omega)$ of $(M,\omega)$ is
 bounded by
 \begin{equation}
 V(M,\frac1a \omega_0)\leq V(M,\omega)\leq  V(M,\frac1b\omega_0).
\end{equation}
If the first (respectively the second) equality holds, then
$(M,\omega)$ is holomorphically isometric to  $(M,\frac1a\omega_0)$
(respectively $(M,\omega)$ is holomorphically isometric to
$(M,\frac1a\omega_0)$).
\end{itemize}

\end{prop}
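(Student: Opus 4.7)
The approach is cohomological.  Since $[\omega_0]=-c_1(M)$ in case (i) and $[\omega_0]=c_1(M)$ in case (ii) (with a fixed normalization), while $[\omega]$ is by hypothesis a real multiple of $c_1(M)$, the two K\"ahler classes are proportional.  Positivity of both forms forces
\[
[\omega]\,=\,s\,[\omega_0]
\]
for a unique $s>0$, whence $V(M,\omega)=s^{n}V(M,\omega_0)$.

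I would then evaluate the total scalar curvature cohomologically.  From the pointwise identity $R\,\omega^{n}/n!=\rho\wedge\omega^{n-1}/(n-1)!$, where $\rho=\Ric(\omega)$, together with $[\rho]=[\Ric(\omega_0)]=\mp[\omega_0]$ (minus sign in case (i), plus in case (ii)), Stokes' theorem gives
\[
\int_M R\,\frac{\omega^{n}}{n!}\,=\,\mp\,n\,s^{n-1}\,V(M,\omega_0),
\]
so the mean scalar curvature is $\bar R=\mp n/s$.  Feeding in the pointwise bounds on $R$ (for example $-nb\le R\le -na$ in case (i)) pins down $1/b\le s\le 1/a$, and since $V(M,c^{-1}\omega_0)=c^{-n}V(M,\omega_0)$ the announced volume inequalities follow; case (ii) is analogous with the roles of $a$ and $b$ exchanged.

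For the equality cases, $R$ must be constant.  For instance, if $V(M,\omega)=V(M,\tfrac{1}{b}\omega_0)$ in case (i), then $s=1/b$ and $\bar R=-nb$; combined with $R\ge -nb$ pointwise this forces $R\equiv -nb$.  The closed real $(1,1)$-form $\rho+b\omega$ then has zero cohomology class (since $[\rho]=-[\omega_0]=-b[\omega]$), so by the $\ddbar$-lemma $\rho+b\omega=\iddbar h$ for some smooth $h$, and tracing with $\omega$ yields $\Delta h=R+nb\equiv 0$.  Hence $h$ is constant, $\rho=-b\omega$, and $\omega$ is K\"ahler-Einstein in the class $\tfrac{1}{b}[\omega_0]$.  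The three remaining equality cases are symmetric.

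The last step invokes uniqueness of K\"ahler-Einstein metrics in a prescribed class.  In case (i), with $c_1(M)<0$, uniqueness is absolute (Calabi/Aubin--Yau), so $\omega=\tfrac{1}{b}\omega_0$ (respectively $\omega=\tfrac{1}{a}\omega_0$) as claimed.  In case (ii), Bando--Mabuchi uniqueness asserts that any two K\"ahler-Einstein metrics in a positive class differ by a biholomorphism of $M$, which is precisely why the conclusion there is phrased as a holomorphic isometry to $(M,\tfrac{1}{a}\omega_0)$ rather than literal equality.  This final appeal to Bando--Mabuchi is the only nontrivial external input and is the main point to justify; the rest of the argument reduces to a cohomological computation together with the $\ddbar$-lemma.
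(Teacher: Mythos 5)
Your proposal is correct and follows essentially the same route as the paper: both arguments reduce the volume to a power of the proportionality constant between $[\omega]$ and $c_1(M)$, compute the average scalar curvature cohomologically to trap that constant between $a$ and $b$, and in the equality case deduce constant scalar curvature and hence the K\"ahler--Einstein condition before invoking uniqueness (Aubin--Yau, resp.\ Bando--Mabuchi). The only cosmetic difference is that you spell out the constant-scalar-curvature-implies-Einstein step via the $\ddbar$-lemma, where the paper simply cites Tian.
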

 \begin{proof} We only prove (i), the proof of (ii) is similar (For the case of equality, we have
 used  results in \cite{Bando-Mabuchi}). To prove (i), suppose
Suppose $2\pi c_1(M)=- \lambda[\omega]$ with $\lambda>0$. Then it is
well-known that

\begin{equation}
\lambda=-\frac1{nV(M,\omega)}\int_MR\omega^n.
\end{equation}
 Hence $a\leq\lambda\leq b$.

Moreover,
\begin{equation}
V(M,\omega)= \frac{\pi^n}{n!\lambda^n}\int_Mc_1(M)^n=\frac{V(M,\omega_0)}{\lambda^n}
\end{equation}
Therefore
\begin{equation}
\frac{1}{b^n}V(M,\omega_0)\leq V(M,\omega)\leq \frac{1}{a^n}V(M,\omega_0).
\end{equation}
If the first equality holds, we know that $\lambda=b$ and $\omega$
has constant scalar curvature $-b$ and is K\"ahler-Einstein (see
\cite{Tian}). It is easy to see that $\omega=\frac1b\omega_0$. If
the second equality holds, we can prove similarly that
$\omega=\frac1a\omega_0$.
\end{proof}

\begin{cor}\label{vol-p1} Let $(M^n,\omega)$ be a compact manifold with positive holomorphic bisectional curvature. Suppose the scalar curvature $k_2\ge R\ge k_1>0$. Let $V_{k_1}(\CP^n)$  and $V_{k_2}(\CP^n)$ be the volumes of $\CP^n$ with Fubini-Study metrics with constant scalar curvatures $k_1$ and $k_2$ respectively. Then the volume $V(M,g))$ of $M$ satisfies:
 \begin{equation}\label{vol-e1}
 V_{k_2}(\CP^n)\le V(M,\omega)\le   V_{k_1}(\CP^n).
 \end{equation}
   Moreover,  if one of the inequalities is an equality then $(M,g)$ is holomorphically isometric the $ \CP^n$ with Fubini-Study metric.
\end{cor}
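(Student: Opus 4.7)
The plan is to reduce to Proposition \ref{volume}(ii) by first invoking a structural classification. Since $(M^n,\omega)$ has positive holomorphic bisectional curvature, the resolution of the Frankel conjecture by Siu--Yau (and Mori) gives a biholomorphism $M\cong \CP^n$. In particular $c_1(M)>0$ and $b_2(M)=1$, so $[\omega]$ is automatically a positive multiple of $c_1(M)$. Pulling back the Fubini--Study metric along this biholomorphism produces a K\"ahler--Einstein metric $\omega_0$ on $M$ with $\Ric(\omega_0)=\omega_0$. Both hypotheses of Proposition \ref{volume}(ii) are therefore satisfied.

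Next I set $a=k_2/n$ and $b=k_1/n$, so the scalar curvature bound reads $nb\le R\le na$, and apply Proposition \ref{volume}(ii) to obtain
\begin{equation*}
V\bigl(M,\tfrac{1}{a}\omega_0\bigr)\le V(M,\omega)\le V\bigl(M,\tfrac{1}{b}\omega_0\bigr).
\end{equation*}
The normalization $\Ric(\omega_0)=\omega_0$ gives $\omega_0$ complex scalar curvature $n$, so a rescaled metric $\lambda\omega_0$ has scalar curvature $n/\lambda$. Hence $\tfrac1a\omega_0=\tfrac{n}{k_2}\omega_0$ has scalar curvature $k_2$ and $\tfrac1b\omega_0=\tfrac{n}{k_1}\omega_0$ has scalar curvature $k_1$. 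Under the biholomorphism $M\cong\CP^n$ these correspond to Fubini--Study metrics with the stated scalar curvatures, so the outer volumes equal $V_{k_2}(\CP^n)$ and $V_{k_1}(\CP^n)$ respectively, giving \eqref{vol-e1}.

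For the rigidity statement, equality on either side of \eqref{vol-e1} corresponds to equality on the matching side in Proposition \ref{volume}(ii), which identifies $(M,\omega)$ up to holomorphic isometry with a rescaling of $\omega_0$; composing with the biholomorphism $M\cong\CP^n$ realizes this as $\CP^n$ with a Fubini--Study metric. No genuinely hard step arises: the substantive input is the Frankel conjecture, which converts positive bisectional curvature into the existence of the reference K\"ahler--Einstein metric $\omega_0$ and the needed cohomological condition; the only bookkeeping that requires care is how the complex scalar curvature scales under rescaling of $\omega_0$.
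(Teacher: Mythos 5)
Your proof is correct and follows essentially the same route as the paper: invoke Mori and Siu--Yau to identify $M$ biholomorphically with $\CP^n$, then apply Proposition \ref{volume}(ii). The paper states this in one sentence; you have merely supplied the (correct) bookkeeping on the cohomological hypothesis, the existence of the reference K\"ahler--Einstein metric, and the scaling of the scalar curvature.
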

\begin{proof} By \cite{Mori,SiuYau}, $M$ is biholomorphic to $\CP^n$ and we can apply Proposition \ref{volume}(ii) and the results follow.

\end{proof}


\begin{thebibliography}{99}
\bibitem{Aubin} Aubin, T., {\sl \'Equations du type Monge-Amp\`ere sur les vari\'et\'es k\"ahleriennes compactes},  C. R. Acad. Sci. Paris S\'er. A-B {\bf 283} (1976), no. 3, Aiii, A119--A121.
\bibitem{Bando-Mabuchi}Bando, S.; Mabuchi, T.,{\sl Uniqueness of Einstein Kähler
metrics modulo connected group actions.} Algebraic geometry, Sendai,
1985, 11--40, Adv. Stud. Pure Math., 10, North-Holland, Amsterdam,
1987.
\bibitem{Berger} Berger, M. {\sl Sur les vari\'et\'es d'Einstein compactes,}
Comptes Rendus de la IIIe R\'eunion du Groupement des Math\'ematiciens
d'Expression Latine (Namur, 1965) pp. 35--55 Librairie
Universitaire, Louvain.
\bibitem{BishopGoldberg1965} Bishop, R. L. and Goldberg, S. I., {\sl On the second cohomologh group of a Kaehler manifold of positive curvature} Proc. Amer. Math. Soc. 16 (1965), 119--122.

\bibitem{CN}Cao, H.-D. and  Ni, L., {\sl Matrix Li-Yau-Hamilton estimates for the heat equation on K\"ahler manifolds.} Math. Ann. 331 (2005), no. 4, 795--807.
\bibitem{CE} Cheeger, J. and  Ebin, D.  G., {\sl Comparison theorems in Riemannian geometry.} North-Holland Mathematical Library, Vol. 9. North-Holland Publishing Co., Amsterdam-Oxford; American Elsevier Publishing Co., Inc., New York, 1975.
    \bibitem{Ch}Cheng, S.-Y., {\sl Eigenvalue comparison theorems and its geometric applications.} Math. Z. 143 (1975), no. 3, 289--297.
    \bibitem{EH}Eschenburg, J.-H.; Heintze, E. {\sl Comparison theory for Riccati equations.} Manuscripta Math. 68 (1990), no. 2, 209--214.

\bibitem{Futaki1988} Futaki, A., {\sl K\"ahler-Einstein metrics and integral invariants}, Lecture notes in mathematics    1314,
  Springer-Verlag, 1988.



  \bibitem{GoldbergKobayashi1967}  Goldberg, S. I. and Kobayashi, S., {\sl Holomorphic bisectional curvature} J. Differential Geom. 1 (1967) 225--233.



 \bibitem{Klingenberg} Klingenberg, W., {\sl
 Riemannian geometry},  W. de Gruyter, (1982).

\bibitem{LW} Li, P. and  Wang, J.-P.. {\sl Comparison theorem for K\"ahler manifolds and positivity of spectrum.} J. Differential Geom. 69 (2005), no. 1, 43--74.


\bibitem{MiquelPalmer} Miquel, V.  and   Palmer, V., {\sl
Mean curvature comparison for tubular hypersurfaces in \"aler
manifolds and some applications} (English summary) Compositio Math.
86 (1993), no. 3, 317--335.
\bibitem{Mori} Mori, S., {\sl
Projective manifolds with ample tangent bundles}, Ann. of Math. (2)
110 (1979), no. 3, 593-606.


\bibitem{Perelman02} Perelman, G., {\sl The entropy formula for the
Ricci flow and its geometric applications}, arXiv:math.DG/0211159.

\bibitem{Perelman03} Perelman, G., {\sl Ricci flow with surgery on three-manifolds}, arXiv:  math.DG/0303109
\bibitem{Schoen89} Schoen, R., {\sl Variational theory for the total
 scalar curvature functional for Riemannian metrics and related topics},
 in {\it  Topics in calculus of variations (Montecatini Terme, 1987)},
 120--154, Lecture Notes in Math., \textbf{1365}, Springer, Berlin
 (1989).
\bibitem{SiuYau} Siu, Y.-T. and  Yau, S.-T., {\sl Compact K\"ahler manifolds of positive bisectional curvature}, Invent. Math. 59 (1980), no. 2, 189--204.
\bibitem{Tian}     Tian, G., {\sl Canonical metrics in K\"ahler geometry},  Notes taken by Meike Akveld. Lectures in Mathematics ETH Z\"urich. Birkhäuser Verlag, Basel, 2000.
    \bibitem{Yau} Yau, S.-T., {\sl
On the Ricci curvature of a compact K\"ahler manifold and the complex Monge-Amp\`ere equation}, I.
Comm. Pure Appl. Math. {\bf 31} (1978), no. 3, 339--411.
\end{thebibliography}
\end{document}